\def\@maketitle{\newpage
    \null
    \vskip .8truein
    \begin{center}%
     {\bf \@title \par}%
     \vskip 1.5em
     {\small
      \lineskip .5em
      \begin{tabular}[t]{c}\@author
      \end{tabular}\par}%
    \end{center}%
    \par
    \vskip .4truein}
\def\dfrac#1#2{\ds{\frac{#1}{#2}}}
\newcommand{\re}{{\mathbb R}}
\let\ds=\displaystyle
\def\R{{\mathbb R}}
\newtheorem{theorem}{Theorem}[section]
\newtheorem{lemma}{Lemma}[section]
\newtheorem{proposition}{Proposition}[section]
\newtheorem{definition}{Definition}[section]
\newtheorem{remark}{Remark}[section]
\newtheorem{example}{Example}[section]
\DeclareMathOperator{\diver}{div}
\def\proof{\list{}{\setlength{\leftmargin}{0pt}
                      \parskip=0pt\parsep=0pt\listparindent=2em
                      \itemindent=0pt}\item[]\futurelet\testchar\@maybe}
\def\@maybe{\ifx[\testchar \let\next\@Opt
          \else \let\next\@NoOpt \fi \next}
\def\@Opt[#1]{{\it Proof of #1.\ }}\def\@NoOpt{{\it Proof.\ }}
\begin{document}
\title{\Large \bf A note on non-coercive first order Mean Field Games with analytic data}
\author{{\large \sc Paola Mannucci\thanks{Dipartimento di Matematica ``Tullio Levi-Civita'', Universit\`a di Padova, mannucci@math.unipd.it}, Claudio Marchi \thanks{Dipartimento di Ingegneria dell'Informazione, Universit\`a di Padova, claudio.marchi@unipd.it},}\\
 {\large \sc Carlo Mariconda\thanks{Dipartimento di Matematica ``Tullio Levi-Civita'', Universit\`a di Padova, maricond@math.unipd.it},  Nicoletta Tchou\thanks{
IRMAR, Universit\'e de Rennes, CNRS, IRMAR - UMR 6625, F-35000 Rennes, France, nicoletta.tchou@univ-rennes1.fr}}
}

\maketitle
\begin{abstract}
We study first order evolutive Mean Field Games whose operators are non-coercive. This situation occurs, for instance, when some directions are ``forbidden'' to the generic player at some points. Under some very strong regularity assumptions, we establish existence of a weak solution of the system. Mainly, we shall describe the evolution of the population's distribution as the push-forward of the initial distribution through a flow, suitably defined in terms of the underlying optimal control problem. 
As a byproduct, we shall prove the uniqueness of the optimal trajectories associated to the optimal control underlying the Hamilton-Jacobi equation for every initial point and every starting time.
\end{abstract}
\noindent {\bf Keywords}: Mean Field Games, non-coercive first order Hamilton-Jacobi equations, continuity equation.

\noindent  {\bf 2010 AMS Subject classification:} 35F50, 35Q91, 49K20, 49L25.

%
\section{Introduction}

In this paper we study the following Mean Field Game (briefly, MFG) system
\begin{equation}\label{eq:MFG1}
\left\{\begin{array}{lll}
(i)&\quad-\partial_t u+H(x, Du)=F(x,t, m)&\qquad \textrm{in }\re^d\times (0,T)\\
(ii)&\quad\partial_t m-\diver (m\, \partial_pH(x, Du))=0&\qquad \textrm{in }\re^d\times (0,T)\\
(iii)&\quad m(x,0)=m_0(x), u(x,T)=G(x, m(T))&\qquad \textrm{on }\re^d,
\end{array}\right.
\end{equation}
where, if $p=(p_1,\cdots, p_d)$ and $x=(x_1, \cdots, x_d)$, the functions $H(x,p)$ is
\begin{equation}\label{H}
H(x,p)=\frac{1}{2}|pB(x)|^2
\end{equation}
with $B(x)=B(x_1,...,x_d)$ equal to the $d\times d$ matrix 
\begin{equation}\label{hmatrix}
   \begin{pmatrix}
      \!\!&h_{11} & 0&0&0&\cdots &0\!\\
     \!\!&h_{21}(x_1)&h_{22}(x_1) &0&0&\cdots &0\!\\
      \!\!&h_{31}(x_1, x_2)&h_{32}(x_1, x_2)&h_{33}(x_1, x_2)&0&\cdots&0\!\\
      \!\!&\cdots &\cdots  &\cdots&\cdots&\cdots&\cdots\!\\
      \!\!&h_{d1}(x_1,\cdots,x_{d-1})&h_{d2}(x_1,\cdots,x_{d-1})&h_{d3}(x_1,\cdots,x_{d-1})&\cdots \!\!&\cdots&h_{dd}(x_1,\cdots,x_{d-1})\!
\end{pmatrix}
    \end{equation}
where $h_{ij}=h_{ij}(x_1, x_2,\cdots,  x_{i-1})$ and $h_{11}$ is a non-null constant.
From \eqref{H} and \eqref{hmatrix} we have $\partial_pH(x,p)= p\,B(x)\,B^T(x)$.

We shall assume that the functions ~$h_{ij}(x)$ are analytic, bounded, {\em possibly vanishing}
and that $F$ is a nonlocal strongly regularizing coupling (see assumptions below). 
In our paper \cite{MMMT2}, we considered the case $d=2$ with
 much weaker assumptions on the regularity of the coefficients. In our forthcoming paper
 \cite{MMMT3} we shall tackle the case of unbounded coefficients in a generic d-dimensional space.\\
 The main nolvelty with respect to our paper \cite{MMMT2} is the following: in this paper, taking advantage of the strong regularity of the coefficients, we are able to prove the uniqueness of the optimal trajectories (for the control problem underlying the Hamilton-Jacobi equation) for any initial point and any initial time.

These MFG systems arise when the dynamics of the generic player are deterministic and, in some points, may have ``forbidden'' directions (for instance, when the $h_{ij}$ vanish); actually, if the evolution of the whole population's distribution~$m$ is given, each agent wants to choose the control $\alpha=(\alpha_1,\cdots,\alpha_d)$ in the set
\begin{equation}\label{A}
{ \cal{A}}=\{\alpha:[t,T]\rightarrow \re^d, \alpha\in L^2([t,T]) \}
\end{equation}
in order to minimize the cost
\begin{equation}\label{Jgen}
\int_t^T\left[\frac12 |\alpha(\tau)|^2+F(x(\tau),\tau,m)\right]\,d\tau+G(x(T), m(T))
\end{equation}
where, in $[t,T]$, its dynamics $x(\cdot)$ are governed by
  \begin{equation}\label{eq:HJgen}
\left\{
\begin{array}{ll}
x'(s)=\alpha(s)B^T(x)& \\
x(t)=x.&
\end{array}\right.
\end{equation}
We study a problem with dynamics as in \eqref{eq:HJgen} because the structure of the matrix $B$ in 
\eqref{hmatrix} allows us to simplify the calculations in Section \ref{sect:HJ}.
Moreover this class of problems encompasses the Grushin and the Carnot type model (as the Heisenberg one, see Examples \ref{ex:Gru} and \ref{ex:heisen} below). 

Let us recall that the MFG theory studies Nash equilibria in games with a huge number of (``infinitely many'') rational and indistinguishable agents. This theory started with the pioneering papers by Lasry and Lions \cite{LL1,LL2,LL3} and by Huang, Malham\'e and Caines \cite{HMC}. A detailed description of the achievements obtained in these years goes beyond the scope of this paper; we just refer the reader to the monographs \cite{AC, C, BFY, GPV, GS}.
As far as we know, degenerate MFG systems have been poorly investigated up to now. Dragoni and Feleqi~\cite{DF} studied a second order (stationary) system where the principal part of the operator fulfills the H\"ormander condition; moreover, Cardaliaguet, Graber, Porretta and Tonon~\cite{CGPT} tackled degenerate second order systems with coercive (and convex as well) first order operators. Hence, these results cannot be directly applied to the non-coercive problem~\eqref{eq:MFG1}.
On the other hand, the existence of a solution to~\eqref{eq:MFG1} can be obtained by the vanishing viscosity approach as in \cite[Sect. 4.4]{C} (see also~\cite[Sect. 2.5]{LL3}).
Unfortunately, the vanishing viscosity method seems to give no interpretation for the solution to the system.

Therefore, we shall pursue a different approach in order to obtain more detailed information on the evolution of the population's density. In particular, following the arguments in~\cite[Sect. 4.3]{C}, we are able to describe this evolution as the push-forward of the distribution at the initial time through a flow which is suitably defined in terms of the optimal control problem underlying~\eqref{eq:MFG1}. As a matter of facts, the non-coercivity of~$H$ prevents from applying directly the arguments of \cite[Sect. 4.3]{C}. The well-posedness and several properties of this flow may be considered among the main novelties of this paper. In particular, we prove that the optimal trajectories of the control problem associated to the Hamilton-Jacobi equation~\eqref{eq:MFG1}-(i) are unique after the starting time (see Proposition~\ref{thmunicita} below); as far as we know this property has never been tackled before for degenerate dynamics as~\eqref{eq:HJgen}. The proof of this property is the only point where our very strong regularity assumptions play a role. The rest of the paper follows the argument of \cite{MMMT2}.

We now list our notations and the assumptions that will hold throughout the whole paper, we give the definition of (weak) solution to system~\eqref{eq:MFG1} and we state the main results for system~\eqref{eq:MFG1}: 
existence and representation formula.

\noindent\underline {Notations and Assumptions}. We denote by~$\mathcal P_1$ the space of Borel probability measures on~$\re^d$ with finite first order moment, endowed with the Kantorovich-Rubinstein distance~{${\bf d}_1$}. Throughout this paper, we shall require the following hypotheses:
\begin{itemize}
\item[(H1)]\label{H1} for every $m\in C([0,T],\mathcal P_1)$, the function~$F(\cdot,\cdot,m)$ is analytic;~$G$ is a real-valued function, continuous on~$\re^d\times\mathcal P_1$;
\item[(H2)]\label{H2} there exists~$C$ such that
$$\|F(\cdot,t,m_1)\|_{C^2}, \|G(\cdot,m_2)\|_{C^2}\leq C,\qquad \forall m_1\in C^0([0,T],\mathcal P_1),\, m_2\in \mathcal P_1;$$
\item[(H3)]\label{H3} the functions of the matrix B, $h_{ij}:\mathbb R^{i-1}\to\mathbb R$ are analytic, with $\|h_{ij}\|_{C^2}\leq C$;
\item[(H4)]\label{H4} the initial distribution~$m_0$ is absolutely continuous with a density (that, with a slight abuse of notation, we still denote by~$m_0$) bounded and with compact support.
\end{itemize}
\begin{example} A coupling $F$ satisfying the above assumptions is of the form
$$F(x,t,m)=V\left(x,t,(\rho\star m)(x,t)\right),$$ where $\rho$ and $V$ are analytic functions with $\|V\|_{C^2}$ bounded. This assumption is very restrictive and it plays a role only in the proof of the uniqueness of optimal trajectories; we refer the reader to a forthcoming paper \cite{MMMT2} for much weaker hypotheses.
\end{example}

\vskip 5mm

We now introduce our definition of solution of the MFG system~\eqref{eq:MFG1} and state the main result concerning its existence.
\begin{definition}\label{defsolmfg}
The pair $(u,m)$ is a solution of system \eqref{eq:MFG1} if:
\begin{itemize}
\item[1)] $(u,m)\in W^{1,\infty}(\re^d\times[0,T])\times (C^0([0,T], \mathcal P_1)\cap L^{\infty}(\re^d\times[0,T]))$;
\item[2)] Equation~\eqref{eq:MFG1}-(i) is satisfied by $u$ in the viscosity sense;
\item[3)] Equation~\eqref{eq:MFG1}-(ii) is satisfied by $m$ in the sense of distributions.
\end{itemize}
\end{definition}
Here below we state the main result of this paper.
\begin{theorem}\label{thm:main}
 Under the above assumptions,  system \eqref{eq:MFG1} has a solution $(u,m)$ as in Definition~\ref{defsolmfg}. Moreover
 $m$ is the push-forward of $m_0$ through the characteristic flow
\begin{equation}\label{chflow}
x'(s)= -D_xu(x(s),s) \,B(x(s))\,B^T(x(s)),\quad x(0)=x.
\end{equation}
  \end{theorem}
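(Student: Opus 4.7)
\smallskip
\noindent\textbf{Proof strategy.} The plan is a Schauder fixed-point argument in the spirit of \cite[Sect. 4.3]{C}, carried out on a compact convex subset $\mathcal K$ of $C^0([0,T],\mathcal P_1)$. Given $m\in\mathcal K$, I would first define $u$ as the value function of the optimal control problem with cost \eqref{Jgen} and dynamics \eqref{eq:HJgen}. Hypotheses (H1)--(H3) together with the quadratic penalization $\tfrac12|\alpha|^2$ in \eqref{Jgen} would give that $u$ is globally Lipschitz in $x$ (with constant independent of $m$) and is a viscosity solution of \eqref{eq:MFG1}(i) with terminal trace $u(x,T)=G(x,m(T))$; these are classical facts, once one checks that every point of $\re^d$ is reachable from the support of $m_0$ with finite cost despite the non-coercivity of $H$.

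\smallskip
The second step is to build the flow. By Proposition~\ref{thmunicita}, for every $x\in\re^d$ the optimal trajectory starting at $(x,0)$ is unique; I would denote it by $\Phi^m(x,\cdot)$. The Pontryagin maximum principle combined with the Lipschitz regularity of $u$ identifies this trajectory as a Carath\'eodory solution of \eqref{chflow}. Setting $\Psi(m)(t):=\Phi^m(\cdot,t)_{\sharp} m_0$, the uniform bound on $\|Du\|_\infty$ propagates the compact support of $m_0$ to $\Psi(m)(t)$, and a direct analysis of the flow together with the boundedness of $m_0$ gives $\Psi(m)\in C^0([0,T],\mathcal P_1)\cap L^\infty(\re^d\times[0,T])$. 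A standard test-function computation against \eqref{chflow} then shows that $\Psi(m)$ solves the continuity equation \eqref{eq:MFG1}(ii) in the distributional sense with drift $-Du\,B B^T$.

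\smallskip
The third step is to apply Schauder's theorem to $\Psi:\mathcal K\to\mathcal K$. I would take $\mathcal K$ to consist of measures supported in a fixed compact set and equi-H\"older in time, a set preserved by $\Psi$ thanks to uniform bounds on $\|Du\|_\infty$ and $\|B\|_\infty$. Continuity of $\Psi$ reduces, via stability of viscosity solutions under convergence of the data, to continuous dependence of $\Phi^m$ on $m$, and here Proposition~\ref{thmunicita} is decisive: uniqueness of the optimal trajectory rules out branching, so subsequential convergence of trajectories can be upgraded to full uniform convergence by the classical compactness-plus-uniqueness argument. The fixed point $m=\Psi(m)$, paired with the corresponding $u$, then satisfies Definition~\ref{defsolmfg}, and the representation formula through \eqref{chflow} is built into the construction.

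\smallskip
The main obstacle lies in the second and third steps. The right-hand side of \eqref{chflow} is only bounded and measurable in $(x,s)$ since $u$ is merely Lipschitz and $Du$ need not be continuous; moreover, because $H$ is non-coercive, the semiconcavity of $u$ customarily used to select a flow is unavailable. Without the uniqueness of optimal trajectories provided by Proposition~\ref{thmunicita}, the flow $\Phi^m$ would be multivalued and $\Psi$ neither well-defined nor continuous. The strong analyticity hypotheses (H1) and (H3) are invested entirely in producing that uniqueness.
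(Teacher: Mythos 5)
Your proposal follows essentially the same route as the paper: the value function for \eqref{Jgen}--\eqref{eq:HJgen} solves the Hamilton--Jacobi equation, the uniqueness of optimal trajectories from Proposition~\ref{thmunicita} makes the flow and hence the push-forward map well defined, the push-forward solves the continuity equation (Proposition~\ref{prp:m}), and a Schauder fixed-point argument closes the loop using the stability result of Lemma~\ref{lemma4.19} for continuity of the map. This is precisely the scheme of \cite[Sect.~4.3]{C} adapted via Proposition~\ref{thmunicita}, which is what the paper (deferring details to \cite{MMMT2}) does.
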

For the sake of simplicity, in the following sections we prove Theorem \ref{thm:main} in the particular case $d=2$, $h_{11}=1$ and $h_{21}(x_1)\equiv 0$. Denoting $h(x_1):=h_{22}(x_1)$ the matrix $B$ is
\begin{equation}\label{GRU}
B(x)=
   \begin{pmatrix}
     1& 0\\
      0& h(x_1)        \end{pmatrix}
    \end{equation}
and the dynamics \eqref{eq:HJgen}
becomes:
\begin{equation}\label{eq:HJ2}
\left\{
\begin{array}{ll}
 x_1'(s)=\alpha_1(s)& \quad x_1(t)=x_1 \\
 x_2'(s)=h(x_1(s))\alpha_2(s)&\quad x_2(t)=x_2.
\end{array}\right.
\end{equation}
In this case the  Mean Field Game \eqref{eq:MFG1} is
\begin{equation}\label{eq:MFG2}
\left\{\begin{array}{lll}
(i)&\quad-\partial_t u+\frac 12 |D_Bu|^2=F(x,t, m)&\qquad \textrm{in }\re^2\times (0,T)\\
(ii)&\quad\partial_t m-\diver_B (m D_B u)=0&\qquad \textrm{in }\re^2\times (0,T)\\
(iii)&\quad m(x,0)=m_0(x), u(x,T)=G(x, m(T))&\qquad \textrm{on }\re^2,
\end{array}\right.
\end{equation}
where, for $x=(x_1, x_2)\in \re^2$, $\phi:\R^2\to\R$ and $\Phi:\R^2\to\R^2$  differentiable, we set
$$D_B \phi(x):=(\partial_{x_1}\phi(x) , h(x_1)\partial_{x_2}\phi(x)),\quad \diver_B\Phi(x) :=\partial_{x_1}\Phi_1(x)+h(x_1)\partial_{x_2}\Phi_2(x).$$
In this case we easily see that the direction along $x_2$ is forbidden when $h(x_1)$ has zero value (see also the results of \cite{MMMT2} concerning the same model under weaker assumptions).
\begin{example}\label{ex:Gru}
Examples of metric defined by \eqref{GRU}, are the Grushin type problems, with  analytic and bounded $h$, as $h(x_1)=\sin (x_1)$ or $h(x_1)=\frac{x_1}{\sqrt{1+x_1^2}}$ (see \cite{LM}).
\end{example}
\begin{example}\label{ex:heisen}
For $d=3$, the matrix $B(x)$ can be of Heisenberg type (see \cite{CiSa})
\begin{equation*}
B(x)=
   \begin{pmatrix}
     1& 0&0\\
      0& 1& 0\\
      h_{31}(x_1,x_2)&  h_{32}(x_1,x_2)&h_{33}(x_1,x_2)      \end{pmatrix}
    \end{equation*}
and the corresponding dynamics are
\begin{equation*}
\left\{
\begin{array}{ll}
x_1'(s)=\alpha_1(s)&\\
x_2'(s)=\alpha_2(s)&\\
x_3'(s)=h_{31}(x_1(s),x_2(s))\alpha_1(s)+h_{32}(x_1(s),x_2(s))\alpha_2(s)+h_{33}(x_1(s),x_2(s))\alpha_3(s).&
\end{array}\right.
\end{equation*}

\end{example}

This paper is organized as follows: in section~\ref{sect:HJ} we will find some properties of the solution of the Hamilton-Jacobi equation~\eqref{eq:MFG2}-(i) with fixed $m$. In section~\ref{sect:c_eq} we study the continuity equation~\eqref{eq:MFG2}-(ii) where~$u$ is the solution of a Hamilton-Jacobi equation found in the previous section.
In section~\ref{sect:MFG} we give the proof of the existence of the solution to system~\eqref{eq:MFG2}.
Finally, in the Appendix, we state the problem and the results for the general d-dimensional case ~\eqref{eq:MFG1} and we
introduce the notion of $B$-differentiability with the main properties of $B$-differentiable functions.

%
%
\section{The Hamilton-Jacobi equation}\label{sect:HJ}

 The aim of this section is to study the Hamilton-Jacobi equation~\eqref{eq:MFG2}-(i) with $m$ fixed, namely
\begin{equation}\label{eq:HJ1}
\left\{\begin{array}{ll}
-\partial_t u+\frac 1 2 ( (\partial_{x_1}u)^2+h(x_1)^2(\partial_{x_2}u)^2)=f(x, t)&\qquad \textrm{in }\re^2\times (0,T),\\
u(x,T)=g(x)&\qquad \textrm{on }\re^2
\end{array}\right.
\end{equation}
where~$f(x,t):=F(x,t, m)$ and $g(x):=G(x, m(T))$; hence, our assumptions (H1)-(H3) read
\begin{equation}\label{ass:HJ}
\textrm{$h$ and $f$ are analytic with $\|h\|_{C^2}+ \|f\|_{C^2}+\|g\|_{C^2}<C$.}
\end{equation}
In particular, we shall prove several regularity properties of the solution (Lipschitz continuity and semiconcavity) and mainly the uniqueness of optimal trajectories. In our opinion this uniqueness result has its own interest because, as far as we know, this property has never been tackled before for non-coercive dynamics as~\eqref{eq:HJgen}.\\
The solution $u$ of \eqref{eq:HJ1} can be represented as the value function of the following control problem. Let the set of the admissible controls~$\cal{A}$ be defined as in~\eqref{A} with $d=2$ and, for each control $\alpha\in\mathcal{A}$, let $x(\cdot)$ be the trajectory given by~\eqref{eq:HJ2}; we define the cost
\begin{equation}\label{J}
J_t(\alpha(\cdot),x(\cdot)):= \int_t^T\frac12 |\alpha(\tau)|^2+f(x(\tau),\tau)\,d\tau+g(x(T)).
\end{equation}
\begin{definition} The value function for the cost $J_t$ in \eqref{J} is
\begin{equation}\label{repr}u(x,t):=\inf\left\{ J_t(\alpha(\cdot),x(\cdot)):\, \alpha(\cdot)\in  {\cal{A}},\, ( x (\cdot),\alpha(\cdot)) \text{ satisfying }\eqref{eq:HJ2}\right\}.
\end{equation}
\end{definition}
\begin{lemma}Under assumptions~\eqref{ass:HJ}, the value function $u(x,t)$
is the unique bounded uniformly continuous viscosity solution to problem~\eqref{eq:HJ1}.
\end{lemma}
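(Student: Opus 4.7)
The plan is to follow the classical dynamic programming approach for optimal control problems. First I would verify that the value function $u$ is bounded and Lipschitz continuous. Boundedness is immediate: the admissible control $\alpha\equiv 0$ gives the trajectory $x(\cdot)\equiv x$ and yields $u(x,t)\le T\|f\|_\infty+\|g\|_\infty$, while the nonnegativity of $\frac12|\alpha|^2$ together with the boundedness of $f$ and $g$ provides the symmetric lower bound. As a byproduct, any $\varepsilon$-optimal control $\alpha$ satisfies a uniform estimate $\int_t^T|\alpha(\tau)|^2\,d\tau\le C_0$ (otherwise the quadratic term of the cost would already exceed twice the supremum of $u$). To obtain Lipschitz continuity in $x$, I would apply an $\varepsilon$-optimal control at $x$ to the trajectory issued from a neighbouring point $y$: since $h$ is bounded and Lipschitz, Gronwall yields $|\tilde x(s)-x(s)|\le C|x-y|$ with constants depending only on $\|h\|_{C^1}$ and on the $L^2$-bound $C_0$, and the $C^2$ regularity of $f$ and $g$ then gives $|u(x,t)-u(y,t)|\le C|x-y|$. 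Continuity in $t$ follows analogously, combining the dynamic programming principle with the $L^2$-bound on near-optimal controls.

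Once $u$ is known to be bounded and continuous, I would establish the dynamic programming principle
\[
u(x,t)=\inf_{\alpha\in\mathcal{A}}\Bigl\{\int_t^s\bigl(\tfrac12|\alpha(\tau)|^2+f(x(\tau),\tau)\bigr)d\tau+u(x(s),s)\Bigr\},\qquad t\le s\le T,
\]
and deduce from it, in the classical way, that $u$ is a viscosity solution of \eqref{eq:HJ1}: the subsolution inequality follows by plugging constant controls into the DPP at a maximum of $u-\vf$, and the supersolution inequality from $\varepsilon$-optimal controls at a minimum of $u-\vf$. For uniqueness I would rely on a comparison principle for bounded continuous sub/supersolutions on $\re^2\times[0,T]$. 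Although $H(x,p)=\frac12|pB(x)|^2$ is non-coercive, the boundedness and Lipschitz regularity of $B$ yield the classical structural estimates
\[
|H(x,p)-H(y,p)|\le C(1+|p|^2)|x-y|,\qquad |H(x,p)-H(x,q)|\le C(|p|+|q|)|p-q|,
\]
which, together with the quadratic growth in $p$, suffice to run the doubling-of-variables argument of Crandall--Ishii--Lions on the whole space.

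In my view, the only delicate point is the Lipschitz estimate in the degenerate direction $x_2$: the error $|\tilde x_2(s)-x_2(s)|$ is driven by $(h(\tilde x_1)-h(x_1))\alpha_2(\tau)$, and is a priori controlled only by $\|\alpha_2\|_{L^2}\sqrt{T-t}$. This is precisely where the uniform $L^2$-bound on near-optimal controls becomes essential, and where the lower-triangular structure of $B$ in \eqref{GRU} pays off (since $x_1$ is uncoupled from $x_2$, we have $\tilde x_1-x_1\equiv y_1-x_1$ and can apply Gronwall component-wise). All remaining steps are standard dynamic-programming and viscosity-solution machinery and, as the authors point out, do not require the analyticity assumptions of (H1) and (H3).
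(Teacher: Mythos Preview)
The paper does not give a self-contained proof here: it simply refers to Lemma~2.1 of \cite{MMMT2}. Your outline is the standard dynamic-programming route (boundedness, $L^2$-bound on near-optimal controls, Lipschitz regularity via Gronwall exploiting the triangular structure of~\eqref{eq:HJ2}, DPP, viscosity property, comparison), and this is almost certainly what \cite{MMMT2} does as well. In that sense your approach matches the paper's.

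There is, however, one genuine soft spot in your write-up. You claim that the structural estimates
\[
|H(x,p)-H(y,p)|\le C(1+|p|^2)|x-y|,\qquad |H(x,p)-H(x,q)|\le C(|p|+|q|)|p-q|
\]
``suffice to run the doubling-of-variables argument of Crandall--Ishii--Lions'' for arbitrary bounded uniformly continuous sub/super\-solutions. They do not: with a penalisation $|x-y|^2/(2\varepsilon)$ one only knows $|\hat x-\hat y|^2/\varepsilon\to 0$, so $|p|=|\hat x-\hat y|/\varepsilon$ may blow up like $\varepsilon^{-1/2}$, and then the error term $|\hat x-\hat y|\,|p|^2$ need not vanish. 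This is precisely the well-known obstruction for Hamiltonians with quadratic growth in~$p$. The remedy is already in your hands: since you have proved that the value function $u$ is Lipschitz, when you compare $u$ with any other bounded uniformly continuous solution $v$ the doubling variable $p=(\hat x-\hat y)/\varepsilon$ lies in $D^+u(\hat x,\hat t)$ (or $D^-u$, depending on which side $u$ sits), hence $|p|$ is bounded by the Lipschitz constant of~$u$, and the term $|\hat x-\hat y|\,|p|^2$ goes to zero. So the fix is simply to invoke the Lipschitz regularity of the value function inside the comparison argument, rather than to assert a comparison principle valid for arbitrary bounded continuous sub/supersolutions.
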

\begin{proof}
We argue as in Lemma 2.1 of \cite{MMMT2}.
\end{proof}

\begin{lemma} \label{L1}
Under assumptions~\eqref{ass:HJ}, there hold:
\begin{enumerate}
\item The function $u$ defined in \eqref{repr} is bounded in $\re^2\times [0,T]$,
\item $u(x,t)$ is Lipschitz continuous with respect to the spatial variable~$x$,
\item $u(x,t)$ is Lipschitz continuous with respect to the time variable~$t$.
\end{enumerate}
\end{lemma}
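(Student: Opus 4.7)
The plan rests on the variational representation~\eqref{repr} together with the boundedness assumptions~\eqref{ass:HJ}, and on the remark that no genuinely optimal control need be invoked: it suffices to work with a sequence $\alpha^n$ of $1/n$-minimizers, that is $J_t(\alpha^n,x^n(\cdot))\le u(x,t)+1/n$, and to pass to the limit at the end of each argument. For item~1 I would first take $\alpha\equiv 0$ in~\eqref{repr}, which keeps $x(\cdot)\equiv x$ and yields $u(x,t)\le T\|f\|_\infty+\|g\|_\infty$; the matching lower bound $u(x,t)\ge -T\|f\|_\infty-\|g\|_\infty$ comes from the nonnegativity of $\tfrac12|\alpha|^2$. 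A crucial byproduct is that every $1/n$-minimizer enjoys a uniform kinetic-energy bound $\int_t^T|\alpha^n|^2\,d\tau\le M$, with $M$ depending only on $T,\|f\|_\infty,\|g\|_\infty$. This bound is the essential tool for items~2 and~3.

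For item~2, fix $x,y\in\re^2$ and feed a near-optimal $\alpha^n$ for $(x,t)$ into~\eqref{eq:HJ2} starting from $x$ and from $y$, producing trajectories $x^n$ and $y^n$. The triangular structure of the matrix $B$ in~\eqref{GRU} is decisive: by direct integration, $x^n_1(s)-y^n_1(s)\equiv x_1-y_1$, and hence $|x^n_2(s)-y^n_2(s)|\le|x_2-y_2|+\|h'\|_\infty|x_1-y_1|\int_t^T|\alpha^n_2|\,d\tau$. Cauchy--Schwarz combined with the energy bound $M$ upgrades this to $|x^n(s)-y^n(s)|\le K|x-y|$ uniformly in $s\in[t,T]$. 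Plugging into $J_t(\alpha^n,y^n)-J_t(\alpha^n,x^n)$ and invoking the $C^2$ bounds on $f,g$ coming from~\eqref{ass:HJ} gives $u(y,t)\le u(x,t)+L|x-y|+1/n$; symmetry in $(x,y)$ and $n\to\infty$ finish the step.

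For item~3, fix $t<s\le T$. Dynamic programming with the control identically zero on $[t,s]$ gives the easy direction $u(x,t)-u(x,s)\le(s-t)\|f\|_\infty$. The reverse inequality is the only genuinely delicate point of the lemma: for a $1/n$-minimizer $\alpha^n$, DPP yields $u(x,t)+1/n\ge\int_t^s[\tfrac12|\alpha^n|^2+f(x^n,\tau)]\,d\tau+u(x^n(s),s)$. I then use item~2 to replace $u(x^n(s),s)$ by $u(x,s)-L|x-x^n(s)|$, and estimate $|x-x^n(s)|\le(1+\|h\|_\infty)\sqrt{s-t}\bigl(\int_t^s|\alpha^n|^2\bigr)^{1/2}$ by the same triangular plus Cauchy--Schwarz calculation as above. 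A naive treatment of this term gives only H\"older regularity of order $1/2$; the way around, and the key point of the proof, is to apply Young's inequality in the form $L(1+\|h\|_\infty)\sqrt{s-t}\bigl(\int_t^s|\alpha^n|^2\bigr)^{1/2}\le\tfrac12\int_t^s|\alpha^n|^2+C(s-t)$, which absorbs the cross term into a fraction of the kinetic energy already present in the cost~\eqref{J} at the price of a remainder linear in $s-t$. This absorption is available precisely because the running cost has the correct quadratic scaling in $\alpha$; letting $n\to\infty$ then closes item~3.
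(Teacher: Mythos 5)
Your proof is correct, and it follows the standard value-function argument (bounded cost for $\alpha\equiv 0$, uniform kinetic-energy bound for near-minimizers, transplanting a near-optimal control to a nearby initial point using the triangular structure of $B$, and the Young-inequality absorption of $\sqrt{s-t}\,\|\alpha^n\|_{L^2}$ into the quadratic running cost to upgrade H\"older-$1/2$ to Lipschitz in time) that the paper itself invokes by citing Lemma~2.2 of \cite{MMMT2}. This is essentially the same approach as the paper's.
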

\begin{proof}  
We argue as in Lemma 2.2 of \cite{MMMT2}.
\end{proof}

\begin{lemma} Under assumptions~\eqref{ass:HJ}, the value function~$u(x,t)$, defined in~\eqref{repr} is semiconcave with respect to the variable~$x$.
\end{lemma}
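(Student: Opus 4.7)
The plan is to show that there exists $C>0$ such that
$$u(x+z,t)+u(x-z,t)-2u(x,t)\leq C|z|^2\qquad \forall x,z\in\re^2,\ t\in[0,T],$$
via the classical ``freezing the control'' argument, adapted to the triangular dynamics \eqref{eq:HJ2}. As a preliminary step, I would first check that any $\eta$-optimal control $\alpha^*$ in \eqref{repr} at $(x,t)$ (with $\eta\in(0,1]$) satisfies $\|\alpha^*\|_{L^2(t,T)}\leq M$ for a constant $M$ independent of $(x,t)$ and $\eta$: this follows by taking $\alpha\equiv 0$ as competitor (its trajectory stays at $x$, yielding cost at most $T\|f\|_\infty+\|g\|_\infty$) and using the uniform bounds on $f$ and $g$ from \eqref{ass:HJ}.

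Given such $\alpha^*$ with associated trajectory $x^*(\cdot)$ starting from $x$, I introduce the two trajectories $y^\pm(\cdot)$ of \eqref{eq:HJ2} driven by the \emph{same} control $\alpha^*$ but starting at $x\pm z$. Integrating \eqref{eq:HJ2} from $t$ gives explicitly
$$y_1^\pm(s)=x_1^*(s)\pm z_1,\qquad y_2^\pm(s)=x_2^*(s)\pm z_2+\int_t^s\bigl[h(x_1^*(\tau)\pm z_1)-h(x_1^*(\tau))\bigr]\alpha_2^*(\tau)\,d\tau.$$
By the $C^2$ bound on $h$, a first-order Taylor expansion combined with Cauchy--Schwarz and the bound on $\alpha^*$ gives $\sup_{s\in[t,T]}|y^\pm(s)-x^*(s)|\leq C|z|$. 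The key observation is that when one adds the $+z$ and $-z$ identities, the linear-in-$z_1$ terms cancel and a \emph{second-order} difference of $h$ appears, yielding $\sup_{s\in[t,T]}|y^+(s)+y^-(s)-2x^*(s)|\leq C|z_1|^2$ (the first-coordinate sum being exactly zero).

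Plugging this into the suboptimality inequalities $u(x\pm z,t)\leq J_t(\alpha^*,y^\pm)$ and using $u(x,t)\geq J_t(\alpha^*,x^*)-\eta$, the kinetic contributions $\tfrac12\int_t^T|\alpha^*|^2\,d\tau$ cancel and one obtains
\begin{align*}
u(x+z,t)+u(x-z,t)-2u(x,t)\leq{}& \int_t^T\bigl[f(y^+(\tau),\tau)+f(y^-(\tau),\tau)-2f(x^*(\tau),\tau)\bigr]\,d\tau \\
&{}+\bigl[g(y^+(T))+g(y^-(T))-2g(x^*(T))\bigr]+2\eta.
\end{align*}
A second-order Taylor expansion of $f(\cdot,\tau)$ around $x^*(\tau)$ (and of $g$ around $x^*(T)$) gives
$$f(y^+,\tau)+f(y^-,\tau)-2f(x^*,\tau)=D_xf(x^*,\tau)\cdot\bigl(y^++y^--2x^*\bigr)+O\bigl(|y^+-x^*|^2+|y^--x^*|^2\bigr),$$
and analogously for $g$. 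By the estimates of the previous paragraph and $\|f\|_{C^2},\|g\|_{C^2}\leq C$, both terms are bounded by $C|z|^2$ uniformly in $(x,t)$. Letting $\eta\to 0^+$ concludes.

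The delicate point, and the reason that a naive coercive-case proof does not transfer, is the propagation of the first-coordinate displacement $z_1$ into the second coordinate through the integral $\int_t^s[h(x_1^*\pm z_1)-h(x_1^*)]\alpha_2^*\,d\tau$: individually this is only $O(|z_1|)$, so a direct estimate of $|y^\pm-x^*|$ gives $O(|z|)$, which suffices for Lipschitz regularity but not for semiconcavity. The symmetrization in the second paragraph upgrades this first-order contribution to a second-order one, which is possible precisely because $h\in C^2$ (assumption \eqref{ass:HJ}); this is the only substantive use of the regularity of $h$ in the proof.
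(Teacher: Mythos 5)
Your proof is correct and complete, and it is essentially the argument the paper relies on: the paper's own ``proof'' is just a pointer to Lemma 2.3 of \cite{MMMT2}, which runs the same near-optimal-control/same-control-from-shifted-points scheme, with the symmetrization step you highlight (turning the $O(|z_1|)$ propagation of the shift through $h$ into an $O(|z_1|^2)$ term) being exactly the point where the degenerate dynamics require care beyond the coercive case.
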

\begin{proof}
We argue as in Lemma 2.3 of \cite{MMMT2}.
\end{proof}
For any $(x,t)\in\re^2\times [0,T]$, we denote by ${\cal A}(x,t)$  the \textbf{set of optimal controls} of the minimization problem~\eqref{repr} whose trajectories are governed by~\eqref{eq:HJ2}. As in \cite{C}, we easily see that
if $(t_n, x_n)\to (x,t)$ and $\alpha_n\in{\cal A}(x_n, t_n)$, then, possibly passing to some subsequence, $\alpha_n$ weakly converges in $L^2$ to some $\alpha\in{\cal A}(x,t)$.

The following result gives the optimality condition:
\begin{lemma}\label{EL}
Let $\alpha:=(\alpha_1,\alpha_2)\in {\cal A}(x,t)$.  Let  $x(\cdot):=(x_1(\cdot), x_2(\cdot))$ be the associated optimal trajectory governed by \eqref{eq:HJ2} under assumptions \eqref{ass:HJ}; then the following properties hold:
\begin{enumerate}
\item The optimal control $\alpha$ satisfies
\begin{equation}\forall s\in [t,T]\quad
\label{sistemalphap}
\left\{\begin{array}{ll}
 \alpha_1(s)= p_1(s) \\\alpha_2(s)= h(x_1(s))p_2(s)
\end{array}\right.
\end{equation}
where $p:=(p_1,p_2):[t,T]\to\R^2$  satisfies the following implicit equations:
\begin{equation}\forall s\in [t,T]\quad
\!\begin{cases}\label{tag:p}
p_1(s)\!\!&\!\!\!:=-g_{x_1}(x(T))-\!\!\displaystyle\int_s^T \!\!f_{x_1}(x,\tau)-p_2^2h'(x_1)h(x_1)\,d\tau,\\
p_2(s)\!\!&\!\!\!:=-g_{x_2}(x(T))-\displaystyle\int_s^Tf_{x_2}(x,\tau)\,d\tau.\\
\end{cases}
\end{equation}
\item The control $\alpha$ is of class $C^1$, as well as the pair
$(x, p)=((x_1,x_2), (p_1,p_2))$, and the latter  satisfies the system of differential equations:
\begin{equation}\label{system}
\begin{cases}
(1)\qquad x_1'&= p_1 \\
(2)\qquad x_2'&= h^2(x_1)p_2 \\
(3)\qquad p_1'&= -p_2^2h'(x_1)h(x_1)+f_{x_1}(x,s) \\
(4)\qquad p_2'&= f_{x_2}(x,s),
\end{cases}
\end{equation}
with the mixed boundary conditions $x(t)= x$, $p(T)=-\nabla g(x(T))$.
\end{enumerate}
\end{lemma}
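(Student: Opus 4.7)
The plan is a classical Pontryagin-type variational argument adapted to the specific dynamics \eqref{eq:HJ2}, followed by a regularity bootstrap. I start from the first-order necessary condition for optimality. For a test direction $\beta=(\beta_1,\beta_2)\in L^2([t,T];\mathbb{R}^2)$, consider the perturbed control $\alpha^\epsilon:=\alpha+\epsilon\beta$ and its trajectory $x^\epsilon$ starting from $x$ at time $t$ as in \eqref{eq:HJ2}. By \eqref{ass:HJ}, $h,h'$ and the first derivatives of $f,g$ are bounded, so a standard Gronwall argument shows $x^\epsilon$ is differentiable at $\epsilon=0$, with $y:=\partial_\epsilon x^\epsilon|_{\epsilon=0}$ solving
\[
y_1'=\beta_1,\qquad y_2'=h'(x_1)\alpha_2\,y_1+h(x_1)\beta_2,\qquad y(t)=0.
\]
Optimality of $\alpha$ then yields the first-order condition
\[
\int_t^T\bigl[\alpha\cdot\beta+\nabla_x f(x,\tau)\cdot y\bigr]d\tau+\nabla g(x(T))\cdot y(T)=0.
\]

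Next, define the costate $p=(p_1,p_2)$ by the integral formulas \eqref{tag:p}: the second equation, being uncoupled, defines $p_2$ first, and then $p_1$ is defined as well, so $p$ is a priori continuous on $[t,T]$. Differentiating the product $p\cdot y$, using $y(t)=0$ and the terminal value $p(T)=-\nabla g(x(T))$, and integrating on $[t,T]$ expresses $\nabla g(x(T))\cdot y(T)$ in terms of an integral involving $\beta$, $y_1$, and the integrands in \eqref{tag:p}. Substituting into the first-order condition, the $\nabla_x f\cdot y$ contribution cancels by the very choice of $p$, and the identity reduces to
\[
\int_t^T\bigl[(\alpha_1-p_1)\beta_1+(\alpha_2-h(x_1)p_2)\beta_2+p_2\, h'(x_1)(h(x_1)p_2-\alpha_2)\,y_1\bigr]d\tau=0
\]
for every $\beta\in L^2([t,T];\mathbb{R}^2)$.

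Now I exploit the separability of the test direction. Choosing first $\beta_1\equiv 0$ forces $y_1\equiv 0$, so the identity collapses to $\int_t^T(\alpha_2-h(x_1)p_2)\beta_2\,d\tau=0$ for all $\beta_2\in L^2$, yielding $\alpha_2=h(x_1)p_2$ a.e. Substituting this relation back annihilates the $y_1$-integrand, and choosing then $\beta_2\equiv 0$ gives $\alpha_1=p_1$ a.e. This establishes assertion 1. For assertion 2, the formulas just obtained show that $\alpha$ is continuous (since $p$ and $x_1$ are); hence $x\in C^1$ by \eqref{eq:HJ2}, the integrands in \eqref{tag:p} are then $C^0$, so $p\in C^1$, and consequently $\alpha\in C^1$. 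Differentiating \eqref{tag:p} yields the ODE system \eqref{system}, while the boundary conditions are immediate from \eqref{eq:HJ2} at $s=t$ and from \eqref{tag:p} at $s=T$.

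The conceptual core is entirely standard; the main subtlety is that the degenerate $x_2$-dynamics couples $\beta_2$ to $y_1$ via the $h'(x_1)\alpha_2$ factor, producing the nontrivial $y_1$-term in the identity above and preventing a one-shot identification of $\alpha$. The key observation that unblocks the argument is that testing first against $\beta_1\equiv 0$ kills $y_1$ altogether, so $\alpha_2$ can be identified before tackling $\alpha_1$; once done, the rest is routine. The $C^2$ bounds in \eqref{ass:HJ} are used only to justify the Gronwall estimate and dominated convergence at the first-variation step.
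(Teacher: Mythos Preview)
Your argument is correct and follows the standard first-variation/Pontryagin approach, which is exactly what the paper invokes by citing Proposition~2.1 of \cite{MMMT2}. The only minor stylistic detour is that you define $p$ directly by the target formulas~\eqref{tag:p} (with $p_2^2h'h$ already in the $p_1$-equation) rather than by the natural adjoint (with $p_2h'\alpha_2$), which forces your two-step identification $\beta_1\equiv 0$ first; defining the natural adjoint instead would give both relations in one stroke and then identify it with~\eqref{tag:p} a posteriori, but either route is fine.
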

\begin{proof}
We argue as in Proposition 2.1 of \cite{MMMT2}.
\end{proof}
By standard arguments, one can prove the following Lemma so we omit the proof.
\begin{lemma}
\label{concatenato}
Let $\alpha_\star$ be optimal control in~${\cal A}(x,t)$ and $x_\star(\cdot)$ be  the corresponding optimal trajectory for $J_t$. Let $\widetilde\alpha(\cdot) \in {\cal A}(x_\star(s),s)$.
The control law
\begin{equation*}
\overline \alpha(\tau):=\left\{
\begin{array}{ll}
\alpha_\star(\tau)& \; \hbox{ if }\tau \in[t,s]\\
      {\widetilde{\alpha}}(\tau) &\;  \hbox{ if } \tau \in[s, T]
\end{array}\right.
\end{equation*}
is optimal for  $u(x,t)$, i.e., $\overline \alpha\in {\cal A}(x,t)$.
\end{lemma}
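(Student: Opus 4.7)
The plan is a straightforward dynamic-programming / concatenation argument, with no interaction with the degenerate structure of $B$. I would denote by $\overline x$ the trajectory produced by $\overline\alpha$ starting from $(x,t)$ via \eqref{eq:HJ2}, and aim to prove that $J_t(\overline\alpha,\overline x)=u(x,t)$; together with $\overline\alpha\in{\cal A}$ (which is immediate, since the concatenation of two $L^2$ functions is $L^2$) this is exactly the optimality claim. By uniqueness of solutions to \eqref{eq:HJ2} on each subinterval, $\overline x\equiv x_\star$ on $[t,s]$, so in particular $\overline x(s)=x_\star(s)$; consequently on $[s,T]$ the function $\overline x$ coincides with the trajectory $\widetilde x$ attached to $\widetilde\alpha\in{\cal A}(x_\star(s),s)$, again by uniqueness for the Cauchy problem with initial datum $x_\star(s)$ at time $s$.

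Splitting the cost at $\tau=s$ then gives
\[
J_t(\overline\alpha,\overline x) = \int_t^s\!\Big[\tfrac12|\alpha_\star(\tau)|^2 + f(x_\star(\tau),\tau)\Big]\,d\tau + J_s(\widetilde\alpha,\widetilde x) = \int_t^s\!\Big[\tfrac12|\alpha_\star(\tau)|^2 + f(x_\star(\tau),\tau)\Big]\,d\tau + u(x_\star(s),s),
\]
where the last equality uses the optimality of $\widetilde\alpha$ for the subproblem starting at $(x_\star(s),s)$. The same splitting applied to the optimal pair $(\alpha_\star,x_\star)$ produces
\[
u(x,t) = J_t(\alpha_\star,x_\star) = \int_t^s\!\Big[\tfrac12|\alpha_\star(\tau)|^2 + f(x_\star(\tau),\tau)\Big]\,d\tau + J_s\big(\alpha_\star|_{[s,T]},\, x_\star|_{[s,T]}\big),
\]
and since the restriction $\alpha_\star|_{[s,T]}$ is admissible for the subproblem starting at $(x_\star(s),s)$, the last term is at least $u(x_\star(s),s)$. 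Comparing the two identities gives $J_t(\overline\alpha,\overline x)\le u(x,t)$; the reverse inequality is the very definition of $u$, so equality holds and $\overline\alpha\in{\cal A}(x,t)$.

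I do not foresee a real obstacle here: the argument is pure dynamic programming, independent of the specific form of $H$, and it requires neither the Euler--Lagrange system of Lemma~\ref{EL} nor any regularity beyond $L^2$-integrability of the controls. The only point that deserves a word is the matching of the two pieces of $\overline x$ at $\tau=s$, which is automatic because both pieces share the value $x_\star(s)$ at that instant. This is presumably why the authors skip the proof.
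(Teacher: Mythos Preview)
Your argument is correct and is precisely the standard dynamic-programming concatenation that the paper has in mind; indeed the paper does not write a proof at all, stating only that ``by standard arguments, one can prove the following Lemma so we omit the proof.'' Your write-up fills in exactly those standard arguments, with the one cosmetic remark that uniqueness of the trajectory in \eqref{eq:HJ2} is in fact explicit here (integrate for $x_1$, then for $x_2$), so the matching at $\tau=s$ is immediate.
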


In the following Lemma~\ref{CK} we show that the  optimal trajectories are analytic; this property will play a crucial role in the proof of uniqueness of optimal trajectories (after the starting time) which is established in the next proposition.
\begin{lemma}\label{CK}
Under assumptions \eqref{ass:HJ}, the solutions $(x,p)$ of the system \eqref{system} are analytic on $]t,T[$.
\end{lemma}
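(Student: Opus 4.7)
The plan is to reduce the statement to the classical analytic dependence theorem for ordinary differential equations. I would first rewrite \eqref{system} in the compact form
\begin{equation*}
Y'(s)=\Phi(Y(s),s),\qquad Y(s):=(x_1(s),x_2(s),p_1(s),p_2(s)),
\end{equation*}
where
\begin{equation*}
\Phi(Y,s):=\bigl(p_1,\ h^2(x_1)p_2,\ -p_2^2 h'(x_1)h(x_1)+f_{x_1}(x_1,x_2,s),\ f_{x_2}(x_1,x_2,s)\bigr).
\end{equation*}
Thanks to assumption \eqref{ass:HJ}, $h$ and $f$ (hence $h'$, $f_{x_1}$, $f_{x_2}$) are analytic on their domains, so $\Phi$ is jointly real-analytic in $(Y,s)$ on $\R^{4}\times\R$.

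Next I would fix an arbitrary $s_0\in\,]t,T[$ and invoke the analytic Cauchy theorem for ODEs (the classical ODE form of Cauchy--Kowalevski, proved by the method of majorants): since $\Phi$ is analytic in a neighbourhood of $(Y(s_0),s_0)$, the initial value problem
\begin{equation*}
Z'(s)=\Phi(Z(s),s),\quad Z(s_0)=Y(s_0)
\end{equation*}
admits a unique analytic solution $Z$ defined on some open interval around $s_0$.

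Finally, by Lemma~\ref{EL}(2) the pair $(x,p)=Y$ is already of class $C^1$, and $\Phi$ is locally Lipschitz in $Y$ since it is analytic, so the standard Picard--Lindel\"of uniqueness theorem applies. Consequently the $C^1$ solution $Y$ must coincide with the analytic solution $Z$ on the interval where both are defined. Thus $Y$ is analytic in a neighbourhood of $s_0$, and since $s_0\in\,]t,T[$ was arbitrary, $Y=(x,p)$ is analytic on the whole open interval $]t,T[$.

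The step I would expect to require the most care is the joint analyticity of $\Phi$: we need not only that the coefficients $h$, $f$ are analytic in the spatial variables, but that the resulting right-hand side has no singularities on the range of the trajectory and is jointly analytic in $(Y,s)$. Boundedness and analyticity from \eqref{ass:HJ} secure this, while the restriction to the open interval $]t,T[$ is natural because the terminal data $g$ is only $C^{2}$, so analyticity at the endpoint $s=T$ cannot be expected.
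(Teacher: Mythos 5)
Your proposal is correct and follows essentially the same route as the paper: the authors likewise rewrite \eqref{system} as a first-order system with analytic right-hand side and combine the Cauchy--Kovalevskaya theorem (existence of a local analytic solution) with the Cauchy--Lipschitz theorem (uniqueness, forcing the given $C^1$ solution to coincide with the analytic one). Your version merely spells out the localization at an arbitrary $s_0\in\,]t,T[$, which the paper leaves implicit.
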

\begin{proof}  The system \eqref{system} is of the form
$$ (x_1,x_2,p_1, p_2)'=F(s,x_1,x_2,p_1,p_2),$$
where $F:[t,T]\times\R^4\to\R^4$ is the analytic function
$$F(s, x_1,x_2,p_1,p_2):=(p_1, h^2(x_1)p_2, -p_2^2h'(x_1)h(x_1)+f_{x_1}(x_1,x_2,s), f_{x_2}(x_1,x_2,s)).$$
The Cauchy-Kovalevskaya Theorem \cite[Theorem 2.2.21]{RR} and the Cauchy-Lipschitz theorem yield the conclusion.
\end{proof}

\begin{proposition}\label{thmunicita}
Under assumptions \eqref{ass:HJ}, for any $\alpha_\star\in {\cal{A}}(x,t)$, let~$x_\star(\cdot)$ be the corresponding optimal trajectory.
\begin{enumerate}
\item For every $s\in(t,T]$, there are no optimal trajectories  for $J_s(\alpha(\cdot),x(\cdot))$ starting from $x_\star(s)$ at time $s$ other than
 $x_\star(\cdot)$, restricted to $(s,T]$.
\item For every $s\in (t,T)$, $D_Bu(x(s),s)$ exists if and only if ${ \cal{A}}(x(s),s)=\{\alpha\}$ is a singleton and $D_Bu(x(s),s)=-\alpha(s)$ (i.e., $u_{x_1}(x(s),s)=-\alpha_1(s)$, $h(x_1(s))u_{x_2}(x(s),s)=-\alpha_2(s)$).
\end{enumerate}
\end{proposition}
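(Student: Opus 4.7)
The plan is to treat part 1 by contradiction via the identity principle for real-analytic functions, and to deduce part 2 from part 1 combined with the semiconcavity of $u$ already established.

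For part 1, suppose toward a contradiction that there exist $s_0 \in (t, T]$ and an optimal trajectory $\tilde x$ for $J_{s_0}$ starting from $x_\star(s_0)$ with $\tilde x \not\equiv x_\star$ on $[s_0, T]$; let $\tilde\alpha$ denote the associated optimal control. Lemma~\ref{concatenato} produces a concatenated optimal control $\bar\alpha \in {\cal A}(x, t)$, equal to $\alpha_\star$ on $[t, s_0]$ and to $\tilde\alpha$ on $[s_0, T]$, whose trajectory $\bar x$ coincides with $x_\star$ on $[t, s_0]$ but disagrees with $x_\star$ somewhere on $(s_0, T]$. By Lemma~\ref{EL} both $(x_\star, p_\star)$ and $(\bar x, \bar p)$ solve the Hamiltonian system~\eqref{system}, and by Lemma~\ref{CK} the components $x_\star$ and $\bar x$ are real-analytic on $(t, T)$. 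Since they agree on the interval $[t, s_0]$, which has non-empty interior relative to $(t, T)$, the identity principle for real-analytic functions forces $x_\star \equiv \bar x$ on $(t, T)$, and by continuity on $[t, T]$; this contradicts the assumed disagreement on $(s_0, T]$.

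For part 2, I would observe first that the singleton property of ${\cal A}(x_\star(s), s)$ is an immediate consequence of part 1, its unique element being the restriction $\alpha_\star|_{[s, T]}$. It then suffices to show that $D_B u(x_\star(s), s)$ exists and coincides with $-\alpha_\star(s)$. For a small perturbation $y \in \re^2$, using $\alpha_\star$ as an admissible (generally non-optimal) control from the perturbed starting point $x_\star(s) + y B^T(x_\star(s))$ and expanding the resulting cost by Taylor's formula, together with integration by parts against the adjoint equation in~\eqref{system} and the boundary $p_\star(T) = -\nabla g(x_\star(T))$, one obtains the upper bound
\begin{equation*}
u\bigl(x_\star(s) + y B^T(x_\star(s)), s\bigr) \leq u(x_\star(s), s) - \alpha_\star(s) \cdot y + o(|y|).
\end{equation*}
The matching lower bound follows from the semiconcavity of $u$ combined with the standard identification of each element of the $B$-superdifferential $D_B^+ u(x_\star(s), s)$ as $-\alpha(s)$ for some $\alpha \in {\cal A}(x_\star(s), s)$; part 1 forces this set to be the singleton $\{-\alpha_\star(s)\}$, and semiconcavity then upgrades a singleton $B$-superdifferential to genuine $B$-differentiability, giving the claimed formula.

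The main difficulty, in my opinion, lies in the careful handling of the $B$-superdifferential in the non-coercive setting: one must verify that only the component $-p(s) B(x_\star(s)) = -\alpha(s)$ of the costate enters the $B$-superdifferential, and that for a semiconcave function having a singleton $B$-superdifferential is equivalent to $B$-differentiability in the sense introduced in the Appendix. All the other ingredients---existence and $C^1$-regularity of optimal controls, the Pontryagin system~\eqref{system}, and analyticity of its solutions---are already supplied by Lemmas~\ref{EL}, \ref{concatenato} and~\ref{CK}.
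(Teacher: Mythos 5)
Your proposal is correct and follows essentially the same route as the paper: part 1 is exactly the paper's argument (concatenation via Lemma~\ref{concatenato}, analyticity via Lemma~\ref{CK}, and the identity principle), while for part 2 the paper simply defers to Lemma 4.9 and Remark 4.10 of \cite{C}, and your sketch fills in precisely that standard upper/lower-bound argument with the $B$-superdifferential. The only point worth making explicit is that uniqueness of the optimal \emph{trajectory} yields uniqueness of the optimal \emph{control} because, by \eqref{tag:p} and \eqref{sistemalphap}, the costate $p$ and hence $\alpha$ are determined by $x(\cdot)$ and the terminal data.
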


\begin{proof}
1. Let us suppose that there exists another optimal trajectory $\widetilde x(\cdot)$ for $[s,T]$.
From Lemma \ref{concatenato} the trajectory $\overline x$ obtained linking $x(\cdot)$ in $[t,s]$ with
 $\widetilde x(\cdot)$ on $[s,T]$ is optimal whence, from
 Lemma \ref{CK}, analytic on $]t,T[$ as well as $x_{\star}(\cdot)$. Since $x_{\star}(\cdot)$ and $\overline x(\cdot)$ coincide on the non trivial interval $]t,s]$, it follows from the Identity Theorem for analytic  functions that $x_{\star}(\cdot)=\overline x(\cdot)$ on $[t,T]$.  Therefore,  their restrictions $x_{\star}(\cdot)$ and $\widetilde x(\cdot)$ to $[s,T]$ are equal.\\
2. We observe that a concatenation result holds (see Appendix A of \cite{MMMT2}); hence by
Point 1, arguing as in \cite{C},  
we get that~$u(\cdot, s)$ is always $B$-differentiable (see the Definition~\ref{G-differenz} below) at $x(s)$ for any $s\in (t,T)$ and $D_Bu(x,s)=-\alpha(s)$.
Then we complete the proof as in Lemma 4.9 and Remark 4.10 of \cite{C}.
\end{proof}

\begin{lemma}
Let $x(\cdot):=(x_1(\cdot),x_2(\cdot))$ be an absolutely continuous solution of the problem
\begin{equation}
\label{4.11}
\left\{
\begin{array}{ll}
x_1'(s)=-u_{x_1}(x(s),s),& \quad x_1(t)=x_1, \\
x_2'(s)=-h^2(x_1(s))u_{x_2}(x(s),s),&\quad x_2(t)=x_2,
\end{array}\right.
\end{equation}
then the control $\alpha=(\alpha_1,\alpha_2)$, with
$$\alpha_1(s)=-u_{x_1}(x(s),s), \ \alpha_2(s)=-h(x_1(s))u_{x_2}(x(s),s)$$ is optimal for $u(x,t)$.
In particular if $u(\cdot, t)$ is differentiable at $x$ then problem \eqref{4.11} has a unique solution corresponding to the optimal trajectory.
\end{lemma}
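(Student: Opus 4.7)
The plan is to prove the two claims separately, with the first (optimality) established by a verification argument along the trajectory, and the second (uniqueness) obtained as a consequence of Proposition~\ref{thmunicita} together with the first part.

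For the admissibility and optimality claim, I would first observe that any absolutely continuous solution of~\eqref{4.11} automatically satisfies the dynamics~\eqref{eq:HJ2} with the control $\alpha$ defined in the statement (just rewrite $x_2'=-h^2(x_1)u_{x_2}=h(x_1)\cdot(-h(x_1)u_{x_2})=h(x_1)\alpha_2$). Since $u$ is Lipschitz in $x$ (Lemma~\ref{L1}), $\alpha$ is essentially bounded and hence in $L^2$, so $\alpha\in\mathcal A$. The heart of the proof is a classical verification-type argument: compute, at Lebesgue a.e.\ $s\in(t,T)$ where $u$ is differentiable at $(x(s),s)$,
\[
\frac{d}{ds}u(x(s),s)=u_t(x(s),s)+u_{x_1}x_1'+u_{x_2}x_2'=u_t-(u_{x_1})^2-h^2(u_{x_2})^2.
\]
Using the a.e.\ identity $u_t=\tfrac12((u_{x_1})^2+h^2(u_{x_2})^2)-f$ that follows from the Hamilton-Jacobi equation~\eqref{eq:HJ1} (which, as $u$ is Lipschitz, holds a.e.), the right-hand side equals $-\tfrac12|\alpha(s)|^2-f(x(s),s)$. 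Integrating from $t$ to $T$ and using $u(x(T),T)=g(x(T))$ gives exactly $u(x,t)=J_t(\alpha,x(\cdot))$, so $\alpha$ is optimal.

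The main obstacle is justifying the chain rule for the absolutely continuous composition $s\mapsto u(x(s),s)$ when $u$ is only semiconcave and Lipschitz, not $C^1$. I would handle this in two steps: (a)~by the Lipschitz regularity, $s\mapsto u(x(s),s)$ is absolutely continuous so it equals the integral of its a.e.\ derivative; (b)~by semiconcavity of $u(\cdot,s)$ (together with the regularity of $u$ in $t$ from Lemma~\ref{L1}), the superdifferential $D^+u$ is everywhere nonempty, and the very assumption that~\eqref{4.11} makes sense forces $u_{x_1}(x(s),s)$ and $u_{x_2}(x(s),s)$ to exist a.e., which selects the relevant element of $D^+u$; the HJ equation then holds pointwise a.e.\ on the curve. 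Alternatively one can use a standard inf/sup-convolution regularization $u^\varepsilon$, do the smooth calculation along $x(\cdot)$, and pass to the limit using Lipschitz bounds and dominated convergence, as in the verification argument of \cite[Sect.~4.3]{C}.

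For the uniqueness claim, suppose $u(\cdot,t)$ is differentiable at $x$ and let $x(\cdot)$, $\widetilde x(\cdot)$ be two absolutely continuous solutions of~\eqref{4.11} starting at $(x,t)$. By the optimality part just proved, the associated controls $\alpha$ and $\widetilde\alpha$ both lie in $\mathcal A(x,t)$. Differentiability of $u(\cdot,t)$ at $x$ forces $\alpha(t)=\widetilde\alpha(t)=(-u_{x_1}(x,t),-h(x_1)u_{x_2}(x,t))$, so the corresponding co-state initial data $p(t)$ in the Hamiltonian system~\eqref{system} coincide (on the component where $h(x_1)\neq 0$; on the degenerate component the $x_2$-dynamics is annihilated so the choice of $p_2(t)$ is irrelevant for $x(\cdot)$). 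The Cauchy--Lipschitz theorem applied to the analytic system~\eqref{system} (using Lemma~\ref{CK}) then gives a single optimal trajectory on $[t,T]$, and Proposition~\ref{thmunicita}(1) forbids any branching for $s>t$. Hence $x(\cdot)=\widetilde x(\cdot)$, which is the required uniqueness.
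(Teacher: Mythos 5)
Your first part (the verification argument for optimality) follows the standard route that the paper itself defers to (Lemma 3.6 of \cite{MMMT2}, modelled on the verification lemma in \cite{C}): you correctly check admissibility of $\alpha$, identify the real obstacle --- the chain rule for the Lipschitz/semiconcave composition $s\mapsto u(x(s),s)$ along a merely absolutely continuous curve --- and propose the standard repairs (superdifferential of a semiconcave function plus the viscosity inequality, or sup-convolution). As a sketch this part is sound and consistent with the intended proof.

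The gap is in the uniqueness part. You recover the initial co-state $p(t)$ of the Pontryagin system \eqref{system} from the initial control value $\alpha(t)$, and you dismiss the degenerate case by asserting that when $h(x_1)=0$ ``the choice of $p_2(t)$ is irrelevant for $x(\cdot)$''. This is false in general: the relation $\alpha_2(t)=h(x_1)p_2(t)$ determines $p_2(t)$ only where $h(x_1)\neq 0$, and if $h(x_1)=0$ at the starting point but $h(x_1(s))\neq 0$ for $s>t$ (take $h(x_1)=\sin x_1$, $x_1=0$, $p_1(t)\neq 0$), two different values of $p_2(t)$ propagate through $p_2'=f_{x_2}$ and $x_2'=h^2(x_1)p_2$ to genuinely different trajectories. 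Since the set $\{h=0\}$ is precisely where the non-coercivity lives, this is the crux of the statement, not a removable technicality. The standard repair is to use the sensitivity relation for the \emph{full} gradient rather than for $\alpha=pB$: for an optimal control $\alpha$, the map $y\mapsto J_t(\alpha,x^y(\cdot))$ (trajectory started at $y$ with the same control) touches $u(\cdot,t)$ from above at $x$ and is differentiable there with gradient $-p(t)$, so $-p(t)\in D_x^+u(x,t)$; differentiability of $u(\cdot,t)$ at $x$ then determines \emph{both} components of $p(t)$, and only at that point does Cauchy--Lipschitz for \eqref{system} (which becomes an initial-value problem once $(x(t),p(t))$ is known; as written it carries mixed boundary data) yield a unique optimal trajectory, with which every solution of \eqref{4.11} must coincide by your part 1. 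Note also that Proposition~\ref{thmunicita}(1) cannot substitute for this initial-time argument, since it only excludes branching strictly after the starting time.
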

\begin{proof}
We argue as in Lemma 3.6 of \cite{MMMT2}.
\end{proof}

\section{The continuity equation}\label{sect:c_eq}
In this section we want to study the well posedness of the problem
\begin{equation}
\label{continuity}
\left\{
\begin{array}{ll}
\partial_t m-\diver_B (m\, D_B u)=0&\qquad \textrm{in }\re^2\times (0,T)\\
m(x,0)=m_0(x) &\qquad \textrm{on }\re^2,
\end{array}\right.
\end{equation}
where $u$ is a solution to the Hamilton-Jacobi problem
\begin{equation}\label{HJ}
\left\{\begin{array}{ll}
-\partial_t u+\frac12 |D_Gu|^2=F(x, t, \overline {m})&\qquad \textrm{in }\re^2\times (0,T),\\
u(x,T)=G(x, \overline {m}(T)),&\qquad \textrm{on }\re^2,
\end{array}\right.
\end{equation}
where the function~$\overline m$ is fixed and fulfills
\begin{equation}\label{mcnd}
\overline {m}\in C^{1/2}([0,T],\mathcal P_1),\qquad \int_{\re^2}|x|^2\,d\overline m(t)(x)\leq K, \qquad t\in[0,T],
\end{equation}
and $m_0$ fulfills assumption~(H4).
The aim of this Section is to prove that problem~\eqref{continuity} has a unique solution which, moreover, can be characterized as the push-forward of the measure $m_0$ through the flow defined by the optimal trajectories of the control problem underlying \eqref{eq:HJ1}.
\begin{proposition}\label{prp:m}
Under assumption~\eqref{ass:HJ} and~$(H4)$,
problem \eqref{continuity} has a unique bounded solution $m$ in the sense of Definition \ref{defsolmfg}. Moreover $m(t,\cdot)$ is a measure absolutely continuous with respect to the Lebesgue measure such that
$\sup_{t\in[0,T]}\|m(t,\cdot)\|_{\infty}\leq C$ and it is a Lipschitz continuous map from $[0,T]$ to $\mathcal P_1$ with a Lipschitz constant bounded by $\|Du\|_\infty \|h^2\|_\infty$.
Moreover, the function $m$ satisfies:
\begin{equation}
\label{ambrosio}
\int_{\re^2} \phi\, dm(t)=\int_{\re^2}\phi(\overline {\gamma}_x(t))\,m_0(x)\, dx \qquad \forall \phi\in C^0_0(\R^2), \, \forall t\in[0,T]
\end{equation}
where, for a.e. $x\in\re^2$,  $\overline{\gamma}_x$ is the solution to \eqref{4.11}.
\end{proposition}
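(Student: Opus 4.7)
The plan is to construct $m$ explicitly as the push-forward of $m_0$ through the optimal-trajectory flow, verify its regularity and that it solves~\eqref{continuity} distributionally, and finally close with a uniqueness argument.

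\medskip

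\emph{Construction and representation formula.} Since $u(\cdot,t)$ is Lipschitz and semiconcave by Section~\ref{sect:HJ}, it is differentiable Lebesgue-a.e.; at every such point $x$ Proposition~\ref{thmunicita} combined with the final lemma of Section~\ref{sect:HJ} implies that problem~\eqref{4.11} admits a unique absolutely continuous solution $\overline{\gamma}_x$, which coincides with the unique optimal trajectory of $J_0$ starting at~$x$. By assumption~(H4) this flow is defined for $m_0$-a.e.~$x$, and I would set $m(t):=(\overline{\gamma}_\cdot(t))_\# m_0$, so that~\eqref{ambrosio} holds by construction. To verify that this $m$ solves~\eqref{continuity} distributionally, I would test against $\phi\in C_c^\infty(\re^2\times[0,T))$, substitute the representation, use $\dot{\overline{\gamma}}_x(s)=-B(\overline{\gamma}_x)B^T(\overline{\gamma}_x)Du(\overline{\gamma}_x,s)$ a.e., and integrate by parts in time, following the scheme of \cite[Sect.~3]{MMMT2}.

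\medskip

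\emph{Regularity of $m$.} The Lipschitz-in-time bound follows from Kantorovich-Rubinstein duality: for every $1$-Lipschitz $\phi$ and $0\le s<t\le T$,
\begin{equation*}
\mathbf{d}_1(m(t),m(s))\;\le\;\int_{\re^2}|\overline{\gamma}_x(t)-\overline{\gamma}_x(s)|\,m_0(x)\,dx\;\le\;|t-s|\,\|Du\|_\infty\|h^2\|_\infty,
\end{equation*}
because $|\dot{\overline{\gamma}}_x|\le \|h^2\|_\infty\|Du\|_\infty$ a.e. The uniform $L^\infty$-bound on the density is the main technical point: using the \emph{one-sided} estimate $D^2 u\le CI$ provided by semiconcavity, together with the boundedness of $h,h'$, one controls $\diver(BB^T Du)$ from above in the distributional sense, and a Gronwall argument on the Jacobian $J_t(x)$ of the flow yields the lower bound $J_t(x)\ge e^{-CT}$. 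The change of variables formula then gives $\|m(t,\cdot)\|_\infty\le e^{CT}\|m_0\|_\infty$.

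\medskip

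\emph{Uniqueness.} This is the hardest step, as the drift $-BB^T Du$ is only semiconcave, not Lipschitz, so classical Cauchy-Lipschitz does not apply. I would invoke the Ambrosio-Gigli-Savar\'e superposition principle, which represents any bounded nonnegative distributional solution of~\eqref{continuity} with initial datum $m_0$ as a superposition of solutions to the characteristic ODE~\eqref{4.11}. Since this ODE admits a unique solution for $m_0$-a.e.~starting point, every such superposition coincides with the push-forward through $\overline{\gamma}$, and the constructed $m$ is the only bounded solution.
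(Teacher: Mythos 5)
Your proof is correct and, in its overall architecture, coincides with the one the paper delegates to \cite[Sect.~3]{MMMT2} (itself modelled on \cite[Sect.~4.3]{C}): definition of $m$ as the push-forward of $m_0$ along the a.e.-uniquely determined solutions of \eqref{4.11}, the $L^\infty$ bound on the density via the one-sided Hessian bound coming from semiconcavity (note that here $\diver_B(D_Bu)=u_{x_1x_1}+h^2(x_1)u_{x_2x_2}$ because $h$ does not depend on $x_2$, so only the upper bounds $u_{x_ix_i}\le C$ and the boundedness of $h$ are needed for the lower bound on the Jacobian), and the Lipschitz-in-time estimate from the uniform bound on the velocity field. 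The one point where you genuinely diverge is the uniqueness step: you invoke the Ambrosio--Gigli--Savar\'e superposition principle \cite{AGS}, whereas the paper follows the more hands-on route of \cite{C} and \cite{MMMT2}, namely mollifying the merely bounded Borel drift into the smooth field $b^{\varepsilon}$ recorded in the Appendix, representing an arbitrary bounded solution along the Lipschitz flow of $b^{\varepsilon}$, and passing to the limit. Both arguments rest on exactly the same key input --- Proposition~\ref{thmunicita} combined with the last lemma of Section~\ref{sect:HJ}, which together give uniqueness of the solution of \eqref{4.11} at every differentiability point of $u(\cdot,t)$, hence Lebesgue-a.e. and so $m_0$-a.e. by (H4). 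The superposition principle buys a shorter and more conceptual proof, and its hypotheses (nonnegative solution, Borel drift with $\int_0^T\int_{\re^2}|D_Bu\,B^T|\,dm(t)\,dt<\infty$) are trivially satisfied here since $Du$ and $h$ are bounded; the mollification argument is more elementary and self-contained, and it is the version that transfers verbatim to the $d$-dimensional triangular case, which is why the Appendix takes care to exhibit the formula for $b^{\varepsilon}$. Either way your argument is complete, provided you make explicit (as you implicitly do) that the disintegration of the superposition measure over the initial point is a Dirac mass for $m_0$-a.e.~$x$ precisely because every absolutely continuous solution of \eqref{4.11} is an optimal trajectory and these are unique at differentiability points.
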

\begin{proof}
The steps to achieve the statement are similar to that of Section 3 of \cite{MMMT2}.
\end{proof}

%
%

\section{Proof of the main Theorem}\label{sect:MFG}
This section is devoted to the proof of our main Theorem~\ref{thm:main}. To this end, it is expedient to recall the stability result (as \cite[Lemma 4.19]{C}) which still holds in our case:
\begin{lemma}\label{lemma4.19}
Let $m_n\in C([0,T], {{\cal P}_1})$ be uniformly convergent to $m\in C([0,T], {{\cal P}_1})$. Then the solution $u_n$ of~\eqref{eq:HJ1} with $f_n(x,t)=F(x,t,m_n)$ and $g(x)=G(x,m_n(T))$ converges locally uniformly to the solution~$u$ to~\eqref{eq:HJ1}.\\
Moreover, denote $\mu_n(s):=\Phi_n(\cdot,0,s)\sharp m_0$ and~$\mu(s):=\Phi(\cdot,0,s)\sharp m_0$ where~$\Phi_n$ (respectively,~$\Phi$) is the flow associated to~$u_n$ (resp.,~$u$). Then, $\mu_n\to\mu$ in $C([0,T], {{\cal P}_1})$.
\end{lemma}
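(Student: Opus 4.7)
\noindent The plan is to prove the two claims separately, using the locally uniform convergence of $u_n$ as a lever for the convergence of the flows.

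For the first claim, I would exploit that, thanks to (H2) giving $C^2$-bounds on $F(\cdot,t,m_1)$ and $G(\cdot,m_2)$ independent of $m_1,m_2$, the estimates in Lemma~\ref{L1} provide bounds on $\|u_n\|_\infty$ and on the Lipschitz constants of $u_n$ in both space and time that are independent of $n$. By Arzel\`a--Ascoli, up to a subsequence, $u_n$ converges locally uniformly to some $\tilde u$. Since $m_n\to m$ uniformly in $C([0,T],{\cal P}_1)$ and $F,G$ depend continuously on the measure variable by (H1), both $f_n(x,t):=F(x,t,m_n)$ and $g_n(x):=G(x,m_n(T))$ converge locally uniformly to $f$ and $g$. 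The standard stability theorem for viscosity solutions then gives that $\tilde u$ solves~\eqref{eq:HJ1}, and uniqueness forces $\tilde u = u$, so the whole sequence converges.

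For the second claim, I would start from the representation~\eqref{ambrosio}: for every $\phi \in C^0_0(\re^2)$,
\[
\int_{\re^2}\phi\, d\mu_n(t)=\int_{\re^2}\phi(\overline{\gamma}^{\,n}_x(t))\, m_0(x)\, dx,\qquad \int_{\re^2}\phi\, d\mu(t)=\int_{\re^2}\phi(\overline{\gamma}_x(t))\, m_0(x)\, dx,
\]
where $\overline{\gamma}^{\,n}_x$ and $\overline{\gamma}_x$ solve~\eqref{4.11} driven by $u_n$ and $u$ respectively. The central sub-claim is that, for a.e.\ $x$ in the support of $m_0$, $\overline{\gamma}^{\,n}_x \to \overline{\gamma}_x$ uniformly on $[0,T]$. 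The uniform Lipschitz bounds on $u_n$ produce equi-boundedness and equi-Lipschitz estimates for $\{\overline{\gamma}^{\,n}_x\}_n$; by Arzel\`a--Ascoli some subsequence converges uniformly to a limit $\gamma^x$. Since $u_n(x,0) \to u(x,0)$ and $f_n,g_n$ converge locally uniformly, passing to the limit in the associated cost functional shows that $\gamma^x$ is optimal for the limit problem starting at $(x,0)$. By semiconcavity (proved earlier in Section~\ref{sect:HJ}), $u(\cdot,0)$ is differentiable outside a Lebesgue-negligible set; at such $x$, Proposition~\ref{thmunicita} guarantees uniqueness of the optimal trajectory issuing from $(x,0)$. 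Hence $\gamma^x = \overline{\gamma}_x$ on a set of full $m_0$-measure, and the whole sequence converges almost everywhere.

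The last step is to transfer this almost-everywhere convergence of trajectories to convergence of the measures. For fixed $t$, dominated convergence (using boundedness and compact support of $m_0$ together with the uniform bound on trajectories) gives $\int\phi\, d\mu_n(t) \to \int\phi\, d\mu(t)$ for every bounded continuous $\phi$; testing against $1$-Lipschitz $\phi$ and invoking the uniform second-moment estimate upgrades this to $\mu_n(t) \to \mu(t)$ in ${\cal P}_1$ for each $t$. Since by Proposition~\ref{prp:m} the maps $t\mapsto\mu_n(t)$ are equi-Lipschitz (uniformly in $n$), an Ascoli-type argument promotes pointwise-in-$t$ convergence to convergence in $C([0,T],{\cal P}_1)$. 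The main obstacle will be the a.e.-uniqueness sub-claim: this is the only point where the full strength of Proposition~\ref{thmunicita} is used, and without combining it with the a.e.\ differentiability of $u(\cdot,0)$, compactness alone would only yield subsequential convergence to \emph{some} optimal trajectory of the limit problem, not to the specific $\overline{\gamma}_x$ driving the flow $\Phi$.
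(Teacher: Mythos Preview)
The paper does not actually supply a proof of this lemma: it is stated as a stability result taken over from \cite[Lemma~4.19]{C} with the remark that it ``still holds in our case''. Your proposal reconstructs precisely the argument one finds in Cardaliaguet's notes, adapted to the present degenerate setting (uniform Lipschitz bounds from Lemma~\ref{L1} plus viscosity stability for the first part; compactness of trajectories, identification of limits via uniqueness of optimal trajectories at a.e.\ starting point, and equi-Lipschitz continuity in $\mathcal P_1$ from Proposition~\ref{prp:m} for the second). This is exactly the intended route and the outline is sound.

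One minor refinement: for the sub-claim that the limit trajectory $\gamma^x$ coincides with $\overline\gamma_x$ for $m_0$-a.e.\ $x$, the statement you need is not quite Proposition~\ref{thmunicita} (which concerns uniqueness \emph{after} the starting time along an optimal trajectory), but rather the lemma immediately following it, which asserts that if $u(\cdot,0)$ is differentiable at $x$ then~\eqref{4.11} has a \emph{unique} solution, equal to the optimal trajectory. Combined with the a.e.\ differentiability of the semiconcave function $u(\cdot,0)$ and the absolute continuity of $m_0$, this gives the identification of the limit on a set of full $m_0$-measure, exactly as you describe.
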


\begin{proof}[Theorem~\ref{thm:main}]
The proof follows the lines of the proof of Theorem 1.1 in \cite{MMMT2}.
\end{proof}

\section*{Appendix}\label{sect:Gdiff}
\subsection{The general case}\label{sect:gen}
In this subsection we collect the ingredients to prove Theorem \ref{thm:main} in the general case as stated in the Introduction. We find the system satisfied by $(x,p)$ analogous to \eqref{system}, we deduce the analyticity of the solution and then the uniqueness of the optimal trajectory after the starting time as in Proposition \ref{thmunicita}. The proofs rely on the same arguments of the model problem studied in the sections above hence we only emphasize the main differences.\\
Following the same procedure as in Section \ref{sect:HJ}, we can prove that the solution $u$ of the Hamilton-Jacobi equation \eqref{eq:MFG1}-(i) with $m$ fixed is bounded in $\re^d\times[0,T]$, Lipschitz continuous with respect to $x$ and $t$ and it is semiconcave with respect to $x$.
Moreover if $\alpha$ and $x$ are respectively the optimal control and the trajectory of the control problem \eqref{Jgen}-\eqref{eq:HJgen} then, as in Lemma \ref{EL}, we can prove:
\begin{enumerate}
\item  The pair $(x, p)$ is of class $C^1$ and satisfies the system of differential equations:
\begin{equation}\label{systemgen}
\begin{cases}
(1)\qquad  x'= p\,B(x)\,B^T(x), \\
(2)\qquad p'= - \dfrac{D_x|pB(x)|^2}2+  D_xf(x,s)\\
(3)\qquad x(t)= x,\ p(T)=-\nabla g(x(T)).
\end{cases}
\end{equation}
Since the functions $h_{ij}$ in the matrix $B$ do not depend on the last variable $x_d$, then, in our case,  the last coordinate of   $D_x|p(s)B(x)|^2$ is $0$, i.e.
in (2) of \eqref{systemgen} the last equation for $p$ is $p_d'(s)= f_{x_d}(x,s)$.
\item The optimal control $\alpha$ is of class $C^1$ and is given by
\begin{equation*}\forall s\in [t,T]\quad
\quad \alpha(s)= p(s)B(x(s)).
\end{equation*}
\end{enumerate}
As in Lemma \ref{CK} we can prove that, under our assumptions, the solutions of  \eqref{systemgen} are analytic and we can use the same argument to prove the statement of Proposition \ref{thmunicita} where the equalities in Point 2 are replaced by
$$\forall s\in (t,T)\quad \alpha(s)=-D_xu(x(s),s)B(x(s)),$$
and from \eqref{eq:HJgen}, equation \eqref{systemgen}-(1)
becomes
$$x'(s)= -D_xu(x(s),s) \,B(x(s))\,B^T(x(s))\,.$$
By the uniqueness of the optimal trajectory after the starting time, we can fix a Borel measurable selection $\overline\alpha$ of ${\cal A}(x,t)$; we define the flow $\Phi(x,t,s)$
$$\Phi(x,t,s)= x+ \int_t^s\,\alpha(\tau)B^T(x(\tau))\,d\tau,$$
where $\alpha$ is the selected element in ${\cal A}(x,t)$,
which satisfies
$$\partial_s\Phi(x,t,s)=-D_xu(x(s),s)\,B(x(s))\, B^T(x(s)).$$

The main issue for extending the results of Section \ref{sect:c_eq} is the definition of the approximating smooth sequence $b_{\epsilon}$.
In this case taking
$$
b^{\varepsilon}(x,t):=-\frac{1}{\mu^{\varepsilon}}(\mu\,\partial_pH(x,D_xu))\star  \rho_{\varepsilon}$$
we can obtain the same results as in Proposition \ref{prp:m}.\\
Collecting all the results recalled here we prove also in this general case Theorem \ref{thm:main}.

%
%
\subsection{B-differentiability}\label{sect:Bdiff}
In this subsection we give the general definition of $B$-diffe\-rentia\-bility in $\re^d$ which is used along the paper.
For the main properties for semiconcave functions we refer to the Appendix B in \cite{MMMT2} where the 2-dimensional case is considered.

\begin{definition}
\label{G-differenz}
A function~$u:\re^d\rightarrow \re$ is $B$-differentiable in $x\in\re^d$ if there exists $\rho_B\in \re^d$ such that
\[
\lim_{v\rightarrow 0}\frac{u(\tilde x)-u(x)-(\rho_B,v)}{\vert v\vert}=0;
\]
where, for $v\in\re^d$ we iteratively define
$\tilde x_1=x_1+h_{11} v_1$, and $\tilde x_i=x_i+\sum_{j=1}îh_{ij}(\tilde x_1, \cdots, \tilde x_{i-1})v_j$,
where $h_{ij}$ are defined in \eqref{hmatrix}.
\end{definition}
In the case treated in the previous sections definition \ref{G-differenz} becomes
\[
\lim_{v\rightarrow 0}\frac{u(x_1+v_1, x_2+h(x_1+v_1)v_2)-u(x_1, x_2)-(\rho_B,v)}{\vert v\vert}= 0.
\]

\begin{remark}
If $u$ is differentiable then $\rho_B=:D_Bu=Du\,B$, where $B$ is defined in \eqref{hmatrix}.
\label{D+D-}
\end{remark}

\noindent{\bf Acknowledgments.} The first and second authors are members of GNAMPA-INdAM and were partially supported also by the research project of the University of Padova "Mean-Field Games and Nonlinear PDEs" and by the Fondazione CaRiPaRo Project "Nonlinear Partial Differential Equations: Asymptotic Problems and Mean-Field Games". The fourth author has been partially funded by the ANR project ANR-16-CE40-0015-01.

\end{document}